\newcommandx{\huom}[2][1=]{\todo[linecolor=red,backgroundcolor=red!10,bordercolor=red,#1]{#2}}
\newcommand{\ve}{\varepsilon}
\newcommand{\R}{\mathbb{R}}
\newcommand{\HH}{\mathbb{H}}
\newcommand{\ang}[1]{\left\langle #1 \right\rangle}
\newcommand{\p}{\partial}
\DeclareMathOperator\dist{dist}
\DeclareMathOperator\dv{div}
\DeclareMathOperator\tR{Ric}
\DeclareMathOperator\Ric{Ric}
\DeclareMathOperator\Hess{Hess}
\numberwithin{equation}{section}
\theoremstyle{plain}
\newtheorem{thm}{Theorem}[section]
\newtheorem{lem}[thm]{Lemma}
\newtheorem{prop}[thm]{Proposition}
\theoremstyle{definition}
\newtheorem{exa}[thm]{Example}
\newtheorem{rem}[thm]{Remark}
\author[J.-B. Casteras et al.]{Jean-Baptiste Casteras}
\address{J.-B. Casteras,
Department of Mathematics and Statistics, P.O. Box 68 (Pietari Kalmin katu 5),
 00014 University of Helsinki, 
  Finland.
}
\email{jeanbaptiste.casteras@gmail.com}
\author[]{Esko Heinonen}
\address{E. Heinonen,
Department of Mathematics and Statistics,
P.O. Box 35, 40014 University of Jyv\"askyl\"a, Finland.
}
\email{ea.heinonen@gmail.com}
\author[]{Ilkka Holopainen}
\address{I. Holopainen,
Department of Mathematics and Statistics, P.O. Box 68 (Pietari Kalmin katu 5),
 00014 University of Helsinki, 
  Finland.
}
\email{ilkka.holopainen@helsinki.fi}
\author[]{Jorge H. De Lira}
\address{J. H. De Lira,
  Departamento de Matem\'atica,
  Universidade Federal do Cear\'a, Bloco 914, Campus do Pici,
  Fortaleza, Cear\'a, 60455-760, Brazil.
}
\email{jorge.lira@mat.ufc.br}
\subjclass[2010]{Primary 53C21, 53E10}
\keywords{Mean curvature flow, prescribed contact angle, translating graphs}
\title[Mean curvature flow with prescribed contact angle]{Non-parametric mean curvature flow with prescribed contact angle in Riemannian products}
\date{\today}
\begin{document}

\begin{abstract} Assuming that there exists a translating soliton $u_\infty$ with speed $C$ in a domain $\Omega$ and with prescribed contact angle on $\partial\Omega$, we prove that a graphical solution to the mean curvature flow with the same prescribed contact angle converges to $u_\infty +Ct$ as $t\to\infty$.  We also generalize the recent existence result of Gao, Ma, Wang and Weng to 
non-Euclidean settings under suitable bounds on convexity of $\Omega$ and Ricci curvature in $\Omega$. 
\end{abstract}
\maketitle


\section{Introduction}
We study a non-parametric mean curvature flow in a Riemannian product $N\times\R$ represented by 
graphs 
\begin{equation}\label{evolgraphs}
M_t:=\left\lbrace\big(x,u(x,t)\big)\colon x\in\bar{\Omega}\right\rbrace
\end{equation}
with prescribed contact angle with the cylinder $\partial\Omega\times\R$. 

We assume that $N$ is a Riemannian manifold and $\Omega\Subset N$ is a relatively compact domain with smooth boundary $\partial\Omega$. We denote by $\gamma$ the inward pointing unit normal vector field to 
$\partial\Omega$.
The boundary condition is determined by a given smooth function $\phi\in C^{\infty}(\partial\Omega)$, 
with $|\phi|\le\phi_0<1$, and the initial condition by a smooth function $u_0\in C^{\infty}(\bar{\Omega})$.

The function $u$ above in \eqref{evolgraphs} is a solution to the following evolution equation 
\begin{equation}\label{para}
\begin{cases}
\dfrac{\partial u}{\partial t} = W
\dv \dfrac{\nabla u}{W}& 
\text{ in } \Omega\times [0,\infty),
\\
\dfrac{\partial_\gamma u}{W}:=\dfrac{\ang{\nabla u,\gamma}}{W}=\phi& \text{ on }\p\Omega\times [0,\infty),\\
u(\cdot,0)=u_0& \text{ in }\bar{\Omega},
\end{cases}
\end{equation}
where $W=\sqrt{1+|\nabla u|^2}$ and $\nabla u$ denotes the gradient of $u$ with respect to the Riemannian metric on $N$ at $x\in \bar{\Omega}$.
The boundary condition above can be written as
\begin{equation}\label{bcondi}
\ang{\nu,\gamma} = \phi,
\end{equation} 
where $\nu$ is the downward pointing unit normal to the graph of $u$, i.e.
	\[
	\nu(x) = \frac{\nabla u(x,\cdot)-\partial_t}{\sqrt{1+|\nabla u(x,\cdot)|^2}},\ x\in\bar{\Omega}.
	\]

The longtime existence of the solution $u_t:=u(\cdot,t)$ to \eqref{para} and convergence as $t\to\infty$ have been studied under various conditions on $\Omega$ and $\phi$.
Huisken \cite{huiskenJDE} proved the existence of a smooth solution in a $C^{2,\alpha}$-smooth bounded domain 
$\Omega\subset\R^n$ for $u_0\in C^{2,\alpha}(\bar{\Omega})$ and $\phi\equiv 0$. Moreover, he showed that $u_t$ converges to a constant function as $t\to\infty$.
In \cite{AW} Altschuler and Wu complemented Huisken's results for prescribed contact angle in case $\Omega$ is a smooth bounded strictly convex domain in $\R^2$. Guan \cite{guan} proved a priori gradient estimates and established longtime existence of solutions in case $\Omega\subset\R^n$ is a smooth bounded domain. Recently, Zhou \cite{ZhouIMRN} studied mean curvature type flows in a Riemannian product $M\times\R$ and proved the longtime  existence of the solution for relatively compact smooth domains $\Omega\subset M$. Furthermore, he extended the convergence result of Altschuler and Wu to the case $M$ is a Riemannian surface with nonnegative curvature and $\Omega\subset M$ is a smooth bounded strictly convex domain; see \cite[Theorem 1.4]{ZhouIMRN}. 

The key ingredient, and at the same time the main obstacle, for proving the uniform convergence of $u_t$ has been a difficulty to obtain a time-independent gradient estimate. We circumvent this obstacle by modifying the method of Korevaar \cite{kore-capillary}, Guan \cite{guan} and Zhou \cite{ZhouIMRN} and obtain a uniform gradient estimate in an arbitrary relatively compact smooth domain $\Omega\subset N$ provided there exists a translating soliton with speed $C$ and with 
the prescribed contact angle condition \eqref{bcondi}.

Towards this end, let $d$ be a smooth bounded function defined in some neighborhood of $\bar{\Omega}$ such that $d(x)= \min_{y\in\p\Omega} \dist(x,y)$, the distance to the boundary $\p\Omega$, for points $x\in\Omega$ sufficiently close to $\p\Omega$. Thus $\gamma=\nabla d$ on $\p\Omega$. 
We assume that $0\le d\le 1$, $|\nabla d|\le 1$ and $|\Hess d| \le C_d$ in $\bar{\Omega}$.
We also assume that the function $\phi\in C^{\infty}(\p\Omega)$ is extended as a smooth function to the whole $\bar\Omega$, satisfying the condition $|\phi| \le \phi_0<1$.

Our main theorem is the following:  

\begin{thm}\label{thm-main}
Suppose that there exists a solution $u_\infty$ to the translating soliton equation
\begin{equation}\label{elli}
\begin{cases}
\dv \dfrac{\nabla u_\infty}{\sqrt{1+|\nabla u_\infty|^2}} =\dfrac{C_\infty}{\sqrt{1+|\nabla u_\infty|^2}}& 
\text{ in } \Omega,
\\
\dfrac{\partial_\gamma u_\infty}{\sqrt{1+|\nabla u_\infty|^2}}=\phi& \text{ on }\p\Omega,
\end{cases}
\end{equation}
where $C_\infty$ is given by
\begin{equation}\label{defC}
C_\infty=\dfrac{-\int_{\p\Omega}\phi\,\mathrm{d}\sigma}{\int_\Omega \left(1+|\nabla u_\infty|^2\right)^{-1/2}\,\mathrm{d}x}.
\end{equation}
Then the equation \eqref{para} has a smooth solution $u\in C^\infty (\bar{\Omega},[0,\infty ))$ with 
$W\leq C_1$, where $C_1$ is a constant depending on $\phi$, $u_0$, $C_d$, and the Ricci curvature of 
$\Omega$. Moreover, $u(x,t)$ converges uniformly to $u_\infty (x) +C_\infty t$ as $t\to\infty$.
\end{thm}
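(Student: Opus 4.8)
The plan is to establish longtime existence by the standard continuation method and then obtain convergence through a monotonicity argument. Writing $v(x,t)=u(x,t)-C_\infty t$, observe that $v$ solves $\partial_t v=W\dv(\nabla v/W)-C_\infty$ with the \emph{same} oblique (contact angle) boundary condition $\partial_\gamma v/W=\phi$, and that $u_\infty+c$ is a stationary solution of this $v$-equation for every constant $c\in\R$, because \eqref{elli} is invariant under adding constants and $W_\infty\dv(\nabla u_\infty/W_\infty)=C_\infty$. Short-time existence of a smooth solution on a maximal interval $[0,T)$ follows from standard quasilinear parabolic theory for oblique derivative problems (assuming the natural compatibility of $u_0$ with the boundary condition), so it suffices to produce a priori bounds, uniform in $t$, on $u$ and on $W$, and then on higher derivatives.

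For the $C^0$ bound, set $c_1=\min_{\bar\Omega}(u_0-u_\infty)$ and $c_2=\max_{\bar\Omega}(u_0-u_\infty)$. Since $u_\infty+c_1$ and $u_\infty+c_2$ are stationary sub/supersolutions of the $v$-equation satisfying the same boundary condition, and $u_\infty+c_1\le v(\cdot,0)\le u_\infty+c_2$, the comparison principle for parabolic oblique derivative problems gives $u_\infty+c_1\le v(\cdot,t)\le u_\infty+c_2$ for all $t\in[0,T)$; in particular $|u(x,t)-C_\infty t|$ is bounded independently of $t$.

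The heart of the argument, and the step I expect to be hardest, is the time-independent gradient estimate $W\le C_1$. Here I would adapt the maximum-principle technique of Korevaar, Guan and Zhou: consider an auxiliary function of the form $\chi=W\,\psi$, where $\psi$ depends on the boundary distance function $d$ and on the normalized height $u-C_\infty t-u_\infty$ (which is bounded by the previous step), compute the evolution of $\chi$ along the flow, and choose the constants in $\psi$ large depending on $C_d$ and a lower Ricci bound on $\Omega$ so that $\chi$ cannot attain an interior space-time maximum unless $W$ is already controlled. At a boundary maximum one differentiates the contact angle condition \eqref{bcondi} tangentially along $\p\Omega$; the term $\partial_\gamma u/W=\phi$ together with $|\phi|\le\phi_0<1$ and the geometry of $\p\Omega$ (through $\Hess d$) supplies the required obliqueness and forces the boundary maximum of $\chi$ to be bounded. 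The role of the translating soliton $u_\infty$ is precisely to make the height-dependent factor in $\psi$ bounded uniformly in $t$; without it the analogous estimate deteriorates as $t\to\infty$. Once $W\le C_1$ is known, the $v$-equation is uniformly parabolic with controlled coefficients, so interior and boundary Schauder estimates (local in time), combined with the already uniform $C^0$ and $C^1$ bounds, give uniform $C^{2,\alpha}(\bar\Omega)$ bounds for $v(\cdot,t)$, $t\ge1$, and bootstrapping yields uniform bounds on all higher derivatives. This rules out finite-time blow-up, so $T=\infty$.

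For convergence, note that the difference of any two solutions of the $v$-flow satisfies a linear uniformly parabolic equation with an oblique, zeroth-order-free boundary condition, so by the maximum principle and Hopf lemma $t\mapsto\max_{\bar\Omega}\bigl(v(\cdot,t)-u_\infty\bigr)$ is non-increasing and $t\mapsto\min_{\bar\Omega}\bigl(v(\cdot,t)-u_\infty\bigr)$ is non-decreasing; by the $C^0$ bound both are bounded, hence convergent, and $\omega(t):=\max_{\bar\Omega}(v(\cdot,t)-u_\infty)-\min_{\bar\Omega}(v(\cdot,t)-u_\infty)$ decreases to some $\omega_\infty\ge0$. If $\omega_\infty>0$, pick $t_j\to\infty$ and use the uniform higher-order estimates to pass to a limit $v_\infty(x,s)=\lim_j v(x,t_j+s)$ in $C^2_{\loc}$, solving the $v$-flow with the same boundary condition and with $\max_{\bar\Omega}(v_\infty(\cdot,s)-u_\infty)-\min_{\bar\Omega}(v_\infty(\cdot,s)-u_\infty)\equiv\omega_\infty$ for all $s\in\R$. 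Applying the strong maximum principle, and the Hopf lemma at the boundary, to the nonnegative solution $z:=v_\infty-u_\infty-\min_{\bar\Omega\times\{s_0\}}(v_\infty-u_\infty)$ of the linearized equation forces $z\equiv0$, so $\omega_\infty=0$, a contradiction. Hence $\omega(t)\to0$; since $\max_{\bar\Omega}(v(\cdot,t)-u_\infty)$ and $\min_{\bar\Omega}(v(\cdot,t)-u_\infty)$ are monotone with vanishing difference, they share a limit $a_\infty$, so $u(x,t)-C_\infty t\to u_\infty(x)+a_\infty$ uniformly on $\bar\Omega$, which is the asserted convergence (the additive constant being absorbed into $u_\infty$, which \eqref{elli} determines only up to a constant anyway).
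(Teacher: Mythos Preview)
Your proposal is correct and follows essentially the same route as the paper: a $C^0$ bound by comparison with the translating soliton (the paper's Lemma~2.2), a Korevaar--Guan--Zhou auxiliary function of the form $W\eta$ with $\eta$ depending on $d$ and a bounded height variable to obtain the time-independent gradient bound (Proposition~2.1 and Lemma~2.3), and then convergence via the strong maximum principle together with a time-shift compactness argument (Theorem~2.4). The only cosmetic differences are that the paper uses $u-C_\infty t$ rather than $u-C_\infty t-u_\infty$ in the exponential factor of $\eta$, and cites \cite{ZhouIMRN} for longtime existence instead of sketching a continuation argument.
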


Notice that the existence of a solution $u\in C^\infty\big(\bar{\Omega}\times [0,\infty)\big)$ to \eqref{para}  is given by \cite[Corollary 4.2]{ZhouIMRN}.

\begin{rem}
Very recently, Gao, Ma, Wang, and Weng \cite{gmww} proved the existence of such $u_\infty$ and obtained Theorem~\ref{thm-main} for smooth, bounded, strictly convex domains $\Omega\subset\R^n$ for sufficiently small $|\phi|$; see \cite[Theorem 1.1, Theorem 3.1]{gmww}. It turns out that their proof can be generalized beyond the Euclidean setting under suitable bounds on the convexity of $\Omega$ and the Ricci curvature in $\Omega$.
\end{rem}
More precisely, let $\Omega\Subset N$ be a relatively compact, strictly convex domain with smooth boundary admitting a smooth defining function $h$ such that $h<0$ in $\Omega$, 
 $h=0$ on $\partial\Omega$, 
 \begin{equation}\label{k1}
 \big(h_{i;j}\big)\ge k_1\big(\delta_{ij}\big)
 \end{equation} 
 for some constant $k_1>0$ and $\sup_{\Omega}|\nabla h|\le 1$, $h_\gamma =-1$ and $|\nabla h|=1$ on $\partial\Omega$.
 Furthermore, by strict convexity of $\Omega$, the second fundamental form of $\partial\Omega$ satisfies
\begin{equation}\label{k0}
\big(\kappa_{ij}\big)_{1\le i,j\le n-1}
\ge\kappa_0 \big(\delta_{ij}\big)_{1\le i,j\le n-1},
\end{equation}
where $\kappa_0>0$ is the minimal principal curvature of $\partial\Omega$. In the Euclidean case, $N=\R^n$, such functions $h$ are constructed in \cite{cns}. We give some simple examples at the end of Section 3.
\begin{thm}\label{thm-cor}
Let $\Omega\Subset N$ be a smooth, strictly convex, relatively compact domain associated with constants $k_1>0$ and $\kappa_0>0$ as in \eqref{k1} and \eqref{k0}.
Let $\alpha<\min\{\kappa_0,k_1(n-1)/2\}$ and assume that the Ricci curvature in $\Omega$ satisfies 
$|\Ric|<\alpha(k_1(n-1)-\alpha)/(n+1)$. Then there 
exists $\varepsilon_0>0$ 
such that if $\phi=:\cos\theta\in C^3(\bar{\Omega})$ satisfies $|\cos\theta|\le\varepsilon_0\le 1/4$ and $||\nabla\theta||_{C^1(\bar{\Omega})}\le \varepsilon_0$ in $\bar{\Omega}$, there exist a unique constant $C_\infty$ and a solution $u_\infty$ to \eqref{elli}. Furthermore, $u_\infty$ is unique up to an additive constant.
\end{thm}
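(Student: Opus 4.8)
## Proof Proposal for Theorem~\ref{thm-cor}

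\textbf{Strategy.} The plan is to follow the continuity/degree-theoretic scheme of Gao--Ma--Wang--Weng \cite{gmww}, replacing Euclidean computations by their Riemannian counterparts and absorbing the resulting curvature terms into the structural constants. We work with the elliptic regularization: for $\delta\in[0,1]$ consider the family
\begin{equation}\label{prop-family}
\begin{cases}
\dv \dfrac{\nabla u}{\sqrt{1+|\nabla u|^2}} - \dfrac{C}{\sqrt{1+|\nabla u|^2}} = \delta\, f & \text{in }\Omega,\\
\dfrac{\partial_\gamma u}{\sqrt{1+|\nabla u|^2}} = \delta\,\phi & \text{on }\p\Omega,
\end{cases}
\end{equation}
together with the normalization \eqref{defC} determining $C=C(\delta)$, and a side condition such as $\int_\Omega u\,\mathrm{d}x=0$ to kill the additive ambiguity. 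At $\delta=0$ the solution is $u\equiv 0$, $C=0$; we want to reach $\delta=1$. By the method of continuity it suffices to establish (i) openness, which follows from the implicit function theorem once the linearized operator (a uniformly oblique Neumann problem for a linear elliptic operator, modulo the one-dimensional kernel of constants handled by the side condition) is shown to be invertible on the appropriate Hölder spaces, and (ii) a priori estimates in $C^{1,\alpha}(\bar\Omega)$ uniform in $\delta$, after which Schauder theory and the assumed $C^3$ regularity of $\phi$ bootstrap to $C^{3,\alpha}$.

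\textbf{The a priori estimates.} The substantive work is the gradient bound $\sup_{\bar\Omega}|\nabla u|\le C_1$ independent of $\delta$. Interior and boundary gradient estimates for the capillary-type problem are obtained by the maximum-principle technique of Korevaar/Guan/Zhou adapted here: one tests the equation against an auxiliary function built from $d$, $\phi$ (extended as in the hypotheses) and $|\nabla u|^2$, and the convexity hypothesis \eqref{k1}--\eqref{k0} together with $|\Ric|<\alpha(k_1(n-1)-\alpha)/(n+1)$ is exactly what is needed to make the relevant barrier/test function work: the defining function $h$ with $(h_{i;j})\ge k_1(\delta_{ij})$ supplies a strictly subsolution-type term, and the smallness $|\cos\theta|\le\ve_0$, $\|\nabla\theta\|_{C^1}\le\ve_0$ keeps the boundary contribution subordinate. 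Here $\alpha$ plays the role of the decay/barrier exponent, and the constraint $\alpha<\min\{\kappa_0,k_1(n-1)/2\}$ guarantees that $\alpha(k_1(n-1)-\alpha)>0$ so the admissible Ricci window is nonempty. One also needs a height (oscillation) bound $\osc_{\bar\Omega}u\le C_0$; with the side condition this follows from the gradient bound via integration, or directly from a comparison argument using $u_\infty$-type barriers. Finally, the constant $C$ is controlled a priori from \eqref{defC} since $|\phi|\le\ve_0$ bounds the numerator and the gradient estimate bounds the denominator away from $0$.

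\textbf{Uniqueness.} For uniqueness of $C_\infty$ and of $u_\infty$ up to a constant, suppose $(u_1,C_1)$ and $(u_2,C_2)$ both solve \eqref{elli}. Subtracting the equations and testing against $u_1-u_2$, the divergence term gives a nonnegative integrand (strict monotonicity of $\xi\mapsto\xi/\sqrt{1+|\xi|^2}$), the boundary term vanishes because both satisfy the same oblique condition, and what remains forces $C_1=C_2$ after using \eqref{defC} and then $\nabla u_1\equiv\nabla u_2$, i.e. $u_1-u_2$ is constant. (Alternatively one invokes the strong maximum principle and Hopf lemma for the difference, which satisfies a linear elliptic equation with an oblique homogeneous boundary condition.)

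\textbf{Main obstacle.} The crux is the uniform gradient estimate: carrying the Korevaar--Guan--Zhou barrier construction through in the Riemannian setting and tracking precisely how the curvature terms enter, so as to pin down the explicit threshold $|\Ric|<\alpha(k_1(n-1)-\alpha)/(n+1)$ and the dependence of $\ve_0$ on $k_1,\kappa_0,\alpha,n$ and the geometry of $\Omega$. Everything else—continuity method bookkeeping, Schauder bootstrap, the uniqueness argument—is comparatively routine.
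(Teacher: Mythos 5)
Your high-level skeleton (regularize, a priori gradient bound, Schauder bootstrap, uniqueness by monotonicity) is sound and broadly parallels the route the paper takes through Gao--Ma--Wang--Weng. But there is a genuine gap, and it is exactly where you yourself flag ``the crux'': the a priori gradient estimate is not actually carried out, and the entire content of this theorem beyond \cite{gmww} is the explicit Ricci threshold $|\Ric|<\alpha\big(k_1(n-1)-\alpha\big)/(n+1)$, which only appears once one does the computation. Deferring it means you have proved nothing new.

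More concretely, you describe the test function as ``built from $d$, $\phi$ and $|\nabla u|^2$,'' which is the Korevaar--Guan--Zhou function $\eta W$ used in Section~2 for the parabolic problem. That is not what is used here. For Theorem~\ref{thm-cor} the paper (following \cite{gmww}) works with the $\varepsilon$-regularized equation $a^{ij}u_{i;j}=\varepsilon u$ with boundary condition $\partial_\gamma u=\phi\sqrt{1+|\nabla u|^2}$, and applies the maximum principle to
\[
\Phi=\log w+\alpha h,\qquad w=v-u^\ell h_\ell\cos\theta,\quad v=\sqrt{1+|\nabla u|^2},
\]
built from the strictly convex defining function $h$, not the distance $d$. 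The constraint $\alpha<\kappa_0$ is what lets the boundary case close (so that $\kappa_0-\alpha>\varepsilon_\alpha(M_1+3)/(1-\varepsilon_\alpha^2)$ gives a bound on the tangential gradient). At an interior maximum, the only genuinely new term compared with the Euclidean case comes from commuting third derivatives of $u$: $a^{ij}u_{k;ij}=a^{ij}u_{i;jk}+\Ric(\partial_k,\nabla u)$. This injects $S^k\Ric(\partial_k,\partial_1)|\nabla u|$ into $\tilde J_2$, and bounding $|S^1|\le 2$, $|S^k|\le1$ ($k\ge2$) gives $\tilde J_2\ge J_2-(n+1)|\Ric_\Omega||\nabla u|$; combined with $II+III\ge-\alpha^2(1+1/v^2)+\alpha k_1(n-1+1/v^2)$ and $J_1+J_2\ge\sum_{i\ge2}u_{ii}^2/(2v)$, the contradiction requires precisely $C_1=-C\varepsilon_0-(n+1)|\Ric_\Omega|+\alpha(k_1(n-1)-\alpha)>0$, which is where the hypothesis and the choice of $\varepsilon_0$ come from. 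There is also a small case distinction (when $C_2=\alpha(k_1-\alpha)<0$ one still gets $v^2\le -C_2/C_1$) that your sketch omits. Without this computation your proposal does not justify the stated hypotheses. Separately, your $\delta$-continuity path with a normalization $\int_\Omega u=0$ is a legitimate alternative to the paper's $\varepsilon u$-regularization; that part of the difference is cosmetic, not a gap.
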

We will sketch the proof of Theorem~\ref{thm-cor} in Section~\ref{sketch}.

\section{Proof of Theorem~\ref{thm-main}}
Let $u$ be a solution to \eqref{para} in $\bar{\Omega}\times\R$.
Given a constant $C_\infty\in\R$ we define, following the ideas of Korevaar \cite{kore-capillary}, Guan \cite{guan} and Zhou \cite{ZhouIMRN}, a function $\eta\colon \bar{\Omega}\times\R\to (0,\infty)$ by setting
\begin{equation}\label{defeta}
\eta = e^{K(u-C_\infty t)}\left(Sd + 1 - \frac{\phi}{W} \ang{\nabla u, \nabla d}\right),
\end{equation}
where $K$ and $S$ are positive constants to be determined later. We start with a gradient estimate.
\begin{prop}\label{propgradest}
Let $u$ be a solution to \eqref{para} and define $\eta$ as in \eqref{defeta}. Then, for a fixed $T>0$, letting
$$(W \eta)(x_0 ,t_0)=\max_{x\in \bar{\Omega},\ t\in [0,T] } (W\eta)(x,t), $$
there exists a constant $C_0$ only depending on $C_d$, $\phi$, $C_\infty$, and the lower bound for the Ricci curvature in $\Omega$ such that $W(x_0 ,t_0)\leq C_0$.
\end{prop}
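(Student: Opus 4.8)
The plan is to follow the scheme of Korevaar, Guan and Zhou: run a maximum principle for $W\eta$ --- equivalently for $v:=\log(W\eta)=\log W+K(u-C_\infty t)+\log Q$ with $Q:=Sd+1-\tfrac{\phi}{W}\ang{\nabla u,\nabla d}$, which satisfies $1-\phi_0\le Q\le S+1+\phi_0$ --- and show that $W$ cannot be large at $(x_0,t_0)$. One may assume $W(x_0,t_0)$ is large (otherwise there is nothing to prove); since $W(x_0,0)\le\sup_{\bar\Omega}\sqrt{1+|\nabla u_0|^2}$ in any case, one may also assume $t_0>0$. Treat separately the interior case $x_0\in\Omega$ and the boundary case $x_0\in\partial\Omega$.

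\emph{Interior case.} At $(x_0,t_0)$ one has $\nabla v=0$, $\partial_t v\ge0$ and a nonpositive spatial Hessian, hence $0\le(\partial_t-\mathcal{L})v$, where $\mathcal{L}\psi=a^{ij}\nabla_i\nabla_j\psi$ with $a^{ij}=g^{ij}-W^{-2}\nabla^i u\,\nabla^j u$ is the operator linearising \eqref{para}; in particular $(\partial_t-\mathcal{L})u=0$, so the factor $e^{K(u-C_\infty t)}$ contributes only the bounded term $-KC_\infty$ to $(\partial_t-\mathcal{L})v$. For $\log W$ I would use the Ecker--Huisken-type evolution of the graphical gradient function along the flow,
\[
(\partial_t-\Delta_{M_t})W=-\bigl(|A|^2+\overline{\Ric}(\nu,\nu)\bigr)W-\frac{2}{W}\,|\nabla_{M_t}W|^2,
\]
where $|A|$ is the second fundamental form of the evolving graph $M_t$ and $\overline{\Ric}(\nu,\nu)=\Ric_N(\nu^N,\nu^N)\ge-\Lambda$ is bounded below by the lower Ricci bound in $\Omega$ (the $\R$-factor being flat), rewriting $\Delta_{M_t}$ through $\mathcal{L}$ modulo first-order terms. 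The terms produced by $\log Q$, once the algebraic form of $Q$ and the flow equation are used to cancel the would-be third-order contributions, are controlled by $S$, $C_d$, $\|\phi\|_{C^2(\bar\Omega)}$ and at most linearly in $|A|$. Substituting $\nabla\log W=-(K\nabla u+\nabla\log Q)$ from $\nabla v=0$ --- whereby the $|\nabla_{M_t}\log Q|^2$ coming from $\log Q$ cancels part of $-|\nabla_{M_t}\log W|^2$, leaving $-K^2 a^{ij}\nabla_i u\,\nabla_j u-2K\,a^{ij}\nabla_i u\,\nabla_j\log Q$ --- and using $a^{ij}\nabla_i u\,\nabla_j u=1-W^{-2}$, one chooses $K$ large in terms of $C_d$, $\phi$, $C_\infty$ and the Ricci lower bound so that the remaining $|A|^2$- and $W$-dependent terms make $0\le(\partial_t-\mathcal{L})v$ impossible unless $W(x_0,t_0)\le C_0$.

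\emph{Boundary case.} Now $x_0\in\partial\Omega$, and since $\gamma$ is the inward normal and $(x_0,t_0)$ maximises $W\eta$ over $\bar\Omega\times[0,T]$, we have $\partial_\gamma v(x_0,t_0)\le0$; it thus suffices to show $\partial_\gamma v(x_0,t_0)>0$ when $W(x_0,t_0)$ is large. On $\partial\Omega$ we have $\nabla d=\gamma$, $\partial_\gamma d=1$, $\ang{\nabla u,\gamma}=\phi W$ and $Q=1-\phi^2$, and from the product rule
\[
\frac{\partial_\gamma(W\eta)}{\eta}=\partial_\gamma W+W\Bigl(K\,\partial_\gamma u+\frac{\partial_\gamma Q}{Q}\Bigr).
\]
The crucial step --- Korevaar's device --- is to differentiate the contact-angle condition $\ang{\nabla u,\gamma}=\phi W$ in directions $\tau$ tangent to $\partial\Omega$: this expresses the mixed Hessian $\Hess u(\tau,\gamma)$, and hence $\partial_\gamma W$, through $\nabla\phi$, the tangential derivatives of $W$ and the second fundamental form of $\partial\Omega$, which is bounded by $C_d$ since $\gamma=\nabla d$ near $\partial\Omega$. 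Inserting this, together with the vanishing of the tangential gradient of $W\eta$ at the maximum, one checks that the dangerous transversal second-derivative terms cancel against the particular structure of $\eta$ or are dominated, while the $Sd$-part leaves the positive contribution $\tfrac{S}{1-\phi^2}W$. Keeping $K$ from the interior step fixed and then choosing $S$ large in terms of $C_d$, $\phi$ and $C_\infty$, we get $\partial_\gamma v(x_0,t_0)>0$, a contradiction; hence again $W(x_0,t_0)\le C_0$.

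\emph{Expected main obstacle.} The interior step is the parabolic Bochner / maximum-principle computation for capillary mean curvature flow in the spirit of Ecker--Huisken and Zhou, and the only genuinely new point is the ambient-curvature term, which is harmless precisely because only a lower Ricci bound enters. The delicate part is the boundary case: differentiating the nonlinear (capillary) Neumann condition generates mixed and transversal second derivatives that must be made to cancel against the very specific algebraic form of $\eta$; one must carry out these cancellations in the Riemannian setting, order the choices of $K$ and then $S$ correctly, and verify that no term grows faster in $W$ than the controlling $\tfrac{S}{1-\phi^2}W$ --- all while confining the dependence of $C_0$ to $C_d$, $\phi$, $C_\infty$ and $\Ric$.
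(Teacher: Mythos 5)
Your proposal follows essentially the same route as the paper: a parabolic maximum-principle argument for $W\eta$ (equivalently for $\log(W\eta)$), split into an interior case handled with the Ecker--Huisken/Zhou evolution of $W$ (the paper writes it as $LW=\tfrac{2}{W}a^{ij}W_iW_j+\Ric(\nu_N,\nu_N)W+|A|^2W$ for $L=a^{ij}\nabla_i\nabla_j-\partial_t$ rather than through $\Delta_{M_t}$, but as you note these differ only by first-order terms) together with the maximum conditions $\nabla(W\eta)=0$, $a^{ij}u_iu_j=1-W^{-2}$, and a boundary case using the tangential maximum condition and Korevaar's tangential differentiation of the contact-angle condition to trigger the key cancellation of the $u_{\gamma;\gamma}$-terms and leave the dominant $S$-contribution. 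The only cosmetic deviation is the order of fixing the constants ($S$ then $K$ in the paper, $K$ then $S$ in yours), which is harmless since the interior bound is insensitive to $S$ through $1/h\le 1/(1-\phi_0)$.
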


\begin{proof}
Let $g=g_{ij}dx^idx^j$ be the Riemannian metric of $N$. We denote by $(g^{ij})$ the inverse of $(g_{ij})$, $u_j =\partial u/\partial x^j$,
and $u_{i;j}=u_{ij}-\Gamma^{k}_{ij}u_k$. We set
\[
	a^{ij} = g^{ij} - \frac{u^iu^j}{W^2}
	\]
and define an operator $L$ by $Lu = a^{ij}u_{i;j}-\partial_t u$. Observe that \eqref{para} can be rewritten as $Lu=0$. 
In all the following, computations will be done at the maximum point $(x_0,t_0)$ of $\eta W$. 
We first consider the case where $x_0\in\partial\Omega$. We choose normal coordinates at $x_0$ such that $g_{ij}=g^{ij} = \delta^{ij}$ at $x_0$,  $\p_n = \gamma$, 
	\[
	u_1 \ge 0, \quad u_i = 0 \quad \text{for } \, 2\le i \le n-1.
	\]
This implies that
	\[
	d_i = 0 \, \text{ for } 1 \le i \le n-1, \, \, d_n = 1, \, \text{ and } \,  d_{i;n} = 0 \, \text{ for } 1\le i \le n.
	\]
We have
	\begin{align}\label{Weta-nDeriv}
	0 &\ge (W\eta)_n = W_n \eta + W\eta_n \nonumber \\
	&= e^{K(u-C_\infty t)} \Big( SW_nd + W_n - \frac{\phi W_n}{W} g^{ij}u_id_j + SWd_n - \frac{W}{W} \phi_n g^{ij} u_id_j \nonumber \\
	&\qquad - \frac{W}{W} \phi g^{ij}(u_{i;n}d_j + u_i d_{j;n}) + W \frac{W_n}{W^2}\phi g^{ij}u_id_j \nonumber \\
	&\qquad + KW u_n (Sd + 1- \frac{\phi}{W} g^{ij} u_id_j)  \Big) \nonumber \\	
	&= e^{K(u-C_\infty t)} \Big( W_n + SW -  \phi_n u_n  - \phi u_{n;n}  + KWu_n ( 1- \phi^2) \Big).
	\end{align}
Using our coordinate system, we get
	\begin{align*}
	0 &\ge \frac{W_n}{W} + S -  \frac{\phi_n u_n}{W}  -\frac{ \phi u_{n;n}}{W}  + Ku_n ( 1- \phi^2) \\	
	&= S -\frac{u_1^2 d_{1;1}}{W^2} +\frac{u_1\phi_1}{W} \Big( 1+ \frac{2\phi^2}{1-\phi^2} \Big)	
	-\frac{\phi u_1}{W} K u_1 \\
	&\qquad -\frac{\phi_n u_n}{W}  + K u_n ( 1- \phi^2) \\
	&\ge  S - C - \frac{K \phi u_1^2}{W} + Ku_n( 1- \phi^2)   \\
	&= S - C - \frac{K\phi}{W} \ge S - C - \frac{K}{W},
	\end{align*}
for some constant $C$ depending only on $C_d$ and $\phi$. So choosing $S \ge C +1$, we get that 
\begin{equation}\label{wbound1}
W(x_0,t_0) \le K.
\end{equation}

Next we assume that $x_0\in\Omega$ and that $S\ge C+1$, where $C$ is as above. Let us recall from 
\cite[Lemma 3.5]{ZhouIMRN} that 
	\begin{equation*}\label{LW-identity}
	LW =  \frac{2}{W} a^{ij} W_i W_j + \tR(\nu_N,\nu_N) W + |A|^2W,
	\end{equation*}
where $\nu_N=\nabla u/W$ and $|A|^2=a^{ij}a^{\ell k}u_{i;k}u_{j;\ell}/W^2$ is the squared norm of the second fundamental form of the graph $M_t$.
Since $0 = W_i\eta + W\eta_i$, for every $i=1,\ldots,n$, we deduce that
	\begin{align*}
	0\ge L(W\eta) &= W L\eta + \eta\Big(LW - 2a^{ij}\frac{W_iW_j}{W} \Big) \\
	&= WL\eta + \eta W\left( |A|^2 + \tR(\nu_N,\nu_N) \right).
	\end{align*}
This yields to
	\begin{equation}\label{est-eta-Ric}
	\frac{1}{\eta}L\eta + |A|^2 + \tR(\nu_N,\nu_N)  \le 0.
	\end{equation}
To simplify the notation, we set
	\[
	h = Sd+1-\phi u^k d_k/W=Sd+1-\phi \nu^k d_k.
	\]
So we have
	\begin{equation}\label{L-eta}
	\frac{1}{\eta} L\eta
	= K^2 a^{ij} u_i u_j + K L(u-C_\infty t) +  \frac{2K}{h} a^{ij} u_i h_j +  \frac{1}{h} Lh.
	\end{equation}

We can compute $Lh$ as
	\begin{align*}
	Lh &= a^{ij} \big( S d_{i;j} - (\phi d_k)_{i;j} \nu^k - (\phi d_k)_i \nu^k_{\ j} -(\phi d_k)_j \nu^k_{\ i} - \phi d_k L\nu^k \big) \nonumber \\
	&\ge - C - 2 a^{ij} (\phi d_k)_i \nu^k_{\ j} - \phi d_k L \nu^k.
	\end{align*}
Since, by \cite[Lemma 3.5]{ZhouIMRN},
	\begin{equation*}\label{L-nu-2nd}
	L\nu^k =  \tR(a^{k\ell}\p_\ell,\nu_N) - |A|^2 \nu^k
	\end{equation*}
and, by Young's inequality for matrices, 
	\[
	a^{ij} (\phi d_k)_i \nu^k_{\ j} = \frac{1}{W} (\phi d_k)_i a^{ij}a^{\ell k} u_{\ell;j} \le \frac{|A|^2}{6} + C,
\]
we get the estimate 
	\begin{equation}\label{Lh-2nd}
	Lh \ge - C -   |A|^2/3 +  \phi d_k\nu^k |A|^2 
	\end{equation}
by using the assumption that $\tR$ is bounded.

Next we turn our attention to the other terms in \eqref{L-eta}. We have
	\begin{equation}\label{a-uf-deriv}
	a^{ij}u_i = \frac{u^j}{W^2} \quad \text{and} \quad a^{ij}u_i u_j 	= 1-\frac{1}{W^2}.
	\end{equation}
Then we note that by the assumptions, we clearly have
	\begin{equation}\label{L-uf}
	KL(u-C_\infty t) =KC_\infty\ge -KC,
	\end{equation}
and we are left to consider
	\begin{align}\label{a-uh-2nd}
	a^{ij} u_i h_j &= \frac{u^j h_j}{W^2} = \frac{u^j\big(Sd_j - (\phi d_k)_j \nu^k - \phi d_k \nu^k_{\ j}\big)}{W^2} \nonumber \\
	&\ge -C - \frac{\phi d_k u^j \nu^k_{\ j}}{W^2}  \nonumber \\
	&= -C + \frac{K\phi a^{\ell k} d_k u_\ell}{W}  + \frac{\phi}{hW} a^{\ell k} d_kh_\ell \nonumber \\
&= -C + \frac{K\phi a^{\ell k} d_k u_\ell}{W}  \nonumber \\
&+\frac{S\phi a^{\ell k} d_k d_\ell}{hW} -  \frac{\phi a^{\ell k} d_k (\phi d_s)_\ell \nu^s}{hW} -  \frac{\phi^2 a^{\ell k} d_k d_s a^{sm} u_{m;\ell}}{hW^2} \nonumber \\
& \geq -C - \frac{CK}{W^2} -\frac{|A|^2}{3K}.
	\end{align}

Plugging the estimates \eqref{Lh-2nd}, \eqref{a-uf-deriv}, \eqref{L-uf}, and \eqref{a-uh-2nd} into \eqref{L-eta} and using \eqref{est-eta-Ric} with the Ricci lower bound we obtain
\begin{align*}
0&\ge K^2 \left( 1- \frac{1}{W^2}  \right) - CK - \frac{2K}{h} \left( C + \frac{CK}{W} + \frac{CK}{W^2} + \frac{|A|^2}{3K} \right)  \\
&\quad - \frac{1}{h} \big( C + |A|^2/3 - \phi d_k\nu^k |A|^2  \big) + |A|^2 -C\\
&= K^2 \left( 1- \frac{1}{W^2}  - \frac{C}{hW^2}\right) - KC \left( 1 + \frac{1}{h}  \right) - \frac{|A|^2}{h} \\
&\quad + \frac{\phi d_k\nu^k |A|^2}{h} - \frac{C}{h} + |A|^2 -C.
\end{align*}
Then collecting the terms including $|A|^2$ and 
noticing that
	\[
	1 - \frac{1}{h} + \frac{\phi d_k\nu^k}{h} 
	= \frac{Sd}{h} \ge 0
	\]
we have
\begin{align*}
0 \ge K^2 \left( 1- \frac{1}{W^2} - \frac{C}{hW^2} \right) - CK \left( 1 + \frac{1}{h}  \right) - C.
\end{align*}	
Now choosing $K$ large enough, we obtain $W(x_0,t_0) \le C_0$, where $C_0$ depends only on $C_\infty$, $d$, $\phi$, the lower bound of the Ricci curvature in $\Omega$, and the dimension of $N$. We notice that the constant $C_0$ is independent of $T$.
\end{proof}
Since 
\[
e^{K\big(u(\cdot,t)-C_\infty t\big)}(1-\phi_0)\le \eta
\le e^{K\big(u(\cdot,t)-C_\infty t\big)}(S+2),
\]
we have 
\begin{align}\label{Wbound}
W(x,t)&\le \dfrac{(W\eta)(x_0,t_0)}{\eta(x,t)}\nonumber
\\
&\le \dfrac{C_0\eta(x_0,t_0)}{\eta(x,t)}\\
&\le \frac{C_0(S+2)}{1-\phi_0} e^{K\big(u(x_0,t_0)-C_\infty t_0-u(x,t)+C_\infty t\big)}\nonumber
\end{align}
for every $(x,t)\in \bar{\Omega} \times [0,T]$.

We observe that the function 
$u_\infty(x)+C t$ solves the equation \eqref{para} with the initial condition $u_0=u_\infty$ if $u_\infty$ is a solution to the elliptic equation \eqref{elli} and $C$ is given by \eqref{defC}.
As in \cite[Corollary 2.7]{AW}, 
applying a parabolic maximum principle (\cite{lieber}) we obtain:
\begin{lem}\label{lem2.7} Suppose that \eqref{elli} admits a solution $u_\infty$ with the unique constant $C$ given by \eqref{defC}.
Let $u$ be a solution to \eqref{para}. Then, we have
$$|u(x,t)-Ct|\leq c_2,$$
for some constant $c_2$ only depending on $u_0,\ \phi$, and $\Omega$. 
\end{lem}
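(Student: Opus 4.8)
The plan is to build on the observation recorded immediately before the statement: if $u_\infty$ solves \eqref{elli} with $C$ as in \eqref{defC}, then $v(x,t):=u_\infty(x)+Ct$ is an honest solution of the parabolic problem \eqref{para} with initial datum $u_\infty$, because both the divergence-form operator and the capillary boundary condition in \eqref{para} depend on $u$ only through $\nabla u$ and $\partial_t u$. This same invariance shows that, for every constant $a\in\R$, the function $v_a(x,t):=u_\infty(x)+Ct+a$ is again a solution of the PDE in \eqref{para} with the same boundary condition $\partial_\gamma v_a/\sqrt{1+|\nabla v_a|^2}=\phi$ on $\partial\Omega\times[0,\infty)$, only with shifted initial value $u_\infty+a$.

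Next I would set $a_+=\max_{\bar\Omega}(u_0-u_\infty)$ and $a_-=\min_{\bar\Omega}(u_0-u_\infty)$, so that $v_{a_-}(\cdot,0)\le u_0\le v_{a_+}(\cdot,0)$ on $\bar\Omega$. The core of the argument is a comparison principle for the quasilinear parabolic problem \eqref{para} with its oblique (capillary) boundary condition: solutions ordered at $t=0$ remain ordered for all $t>0$. To see this one considers the difference $z:=u-v_{a_\pm}$ and applies the mean value theorem to the coefficients $a^{ij}$ and to the boundary operator $\langle\nabla\cdot,\gamma\rangle/W$; this makes $z$ a solution of a linear, uniformly parabolic equation with bounded coefficients and no zeroth-order term, together with a linear oblique derivative condition on $\partial\Omega\times(0,T]$, where the linearized boundary vector $\beta$ satisfies $\langle\beta,\gamma\rangle=(1+|p|^2-\langle p,\gamma\rangle^2)/(1+|p|^2)^{3/2}>0$ along the segment joining $\nabla u$ and $\nabla v_{a_\pm}$. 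A positive maximum of $z$ over $\bar\Omega\times[0,T]$ is then excluded in the interior and at $t=T$ by the parabolic maximum principle, and on the lateral boundary by the Hopf boundary point lemma applied to the oblique derivative condition; this is precisely where the parabolic maximum principle of \cite{lieber} (as in \cite[Corollary 2.7]{AW}) is invoked. Hence the maximum of $z$ is attained at $t=0$, where it is non-positive, which gives $v_{a_-}\le u\le v_{a_+}$ on $\bar\Omega\times[0,\infty)$ after letting $T\to\infty$.

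From the two-sided bound $u_\infty(x)+a_-\le u(x,t)-Ct\le u_\infty(x)+a_+$ we conclude
\[
|u(x,t)-Ct|\le \max\{|a_-|,|a_+|\}+\max_{\bar\Omega}|u_\infty|.
\]
Since $a_\pm$ depend only on $u_0$ and $u_\infty$, and $u_\infty$ — normalized, say, by $\min_{\bar\Omega}u_\infty=0$ — is determined by $\phi$ and $\Omega$, the right-hand side is a constant $c_2$ depending only on $u_0$, $\phi$ and $\Omega$, as claimed. The main obstacle I anticipate is the careful justification of the comparison principle under the nonlinear capillary condition, namely checking that the linearized boundary operator is genuinely oblique with the inner product with $\gamma$ bounded below along the relevant gradient segment, so that the Hopf lemma in \cite{lieber} indeed applies; once this is secured, the remainder is routine.
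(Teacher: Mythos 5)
Your proposal is correct and follows essentially the same route as the paper. The paper sets $V=u-u_\infty$, linearizes to get $\partial_t V=\tilde a^{ij}V_{i;j}+b^iV_i+C$ with the homogeneous linearized oblique boundary condition $\tilde c^{ij}V_i\nu_j=0$, and invokes the parabolic maximum principle; your argument with $z=u-(u_\infty+Ct+a_\pm)$ is the same linearization applied to the shifted comparison functions, just spelled out with more care about the obliqueness of the boundary operator and the Hopf lemma, and with the explicit sandwich $v_{a_-}\le u\le v_{a_+}$.
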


\begin{proof}
Let $V(x,t)=u(x,t)-u_\infty (x) $, where $u_\infty$ is a solution to \eqref{elli}. We see that $V$ satisfies
$$
\begin{cases}\dfrac{\partial V}{\partial t} =\tilde{a}^{ij} V_{i;j}+b^i V_i +C &\text{ in }\Omega\times [0,T)\\
 \tilde{c}^{ij}V_i \nu_j =0 &\text{ on }\partial\Omega\times [0,T),  \end{cases}
 $$
where $\tilde{a}^{ij}$, $\tilde{c}^{ij}$ are positive definite matrices and $b^i \in \R$. Then the proof of the lemma follows by applying the maximum principle.
\end{proof}

In view of Lemma~\ref{lem2.7}, taking $C_\infty=C$, and observing that the constant $C_0$ is independent of $T$, we get from \eqref{Wbound} a uniform gradient bound.
\begin{lem}\label{unigradest} Suppose that \eqref{elli} admits a solution $u_\infty$ with the unique constant $C$ given by \eqref{defC}.
Let $u$ be a solution to \eqref{para}. Then 
$W(x,t)\leq C_1$ for all $(x,t)\in\bar{\Omega}\times [0,\infty)$ with a constant $C_1$ depending only on 
$\phi_0,\ u_0$, and $\Omega$.
\end{lem}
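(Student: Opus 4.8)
The plan is to assemble the uniform bound directly from the three results already in place — Proposition~\ref{propgradest}, the pointwise comparison \eqref{Wbound}, and the height estimate of Lemma~\ref{lem2.7} — the only genuinely new point being to verify that none of the constants involved depends on the time horizon $T$.

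First I would choose the free parameter $C_\infty$ appearing in the weight \eqref{defeta} to be exactly the constant $C$ furnished by \eqref{defC}; this is permitted precisely because \eqref{elli} is assumed solvable with that value. With this choice, Proposition~\ref{propgradest} says that for each fixed $T>0$ the maximum of $W\eta$ over $\bar{\Omega}\times[0,T]$ is attained at some $(x_0,t_0)$ with $W(x_0,t_0)\le C_0$, and — as emphasized in its proof — $C_0$ depends only on $C_d$, $\phi$, $C_\infty=C$, the lower bound of the Ricci curvature in $\Omega$, and the dimension of $N$; in particular it is independent of $T$. Next, plugging this into \eqref{Wbound} gives, for every $(x,t)\in\bar{\Omega}\times[0,T]$,
\[
W(x,t)\le \frac{C_0(S+2)}{1-\phi_0}\,e^{K\big(u(x_0,t_0)-Ct_0-u(x,t)+Ct\big)}.
\]
By Lemma~\ref{lem2.7}, $\abs{u(\cdot,s)-Cs}\le c_2$ uniformly in $s$, so the exponent is at most $2Kc_2$, whence $W(x,t)\le \tfrac{C_0(S+2)}{1-\phi_0}e^{2Kc_2}=:C_1$. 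Since $S$ is fixed by the boundary case ($S\ge C+1$) in Proposition~\ref{propgradest}, $K$ by the interior case, and $c_2$ by Lemma~\ref{lem2.7}, all of these — and hence $C_1$ — depend only on $\phi_0$, $u_0$, and $\Omega$ (the Ricci lower bound and $C_d$ being part of the data of $\Omega$). As $C_1$ is independent of $T$, letting $T\to\infty$ yields the claimed bound on all of $\bar{\Omega}\times[0,\infty)$.

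The main thing to be careful about is exactly this uniformity in $T$: one must be sure that $S$ and $K$ can be fixed once and for all rather than growing with the length of the interval, and that the height bound $c_2$ is genuinely time-independent. Both facts are already built into the statements of Proposition~\ref{propgradest} and Lemma~\ref{lem2.7}, so beyond invoking them the argument is just bookkeeping of constants, with no new analytic difficulty.
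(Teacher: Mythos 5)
Your proof is correct and follows exactly the paper's argument: choose $C_\infty=C$, invoke Proposition~\ref{propgradest} (whose bound $C_0$ is $T$-independent) in \eqref{Wbound}, and cap the exponent via Lemma~\ref{lem2.7}. The paper compresses this into one sentence just before the lemma, so your write-up is simply a fuller version of the same bookkeeping.
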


Having a uniform gradient bound in our disposal, applying once more the strong maximum principle for linear uniformly parabolic equations, we obtain:
\begin{thm}\label{thm-conv}
Suppose that \eqref{elli} admits a solution $u_\infty$ with the unique constant $C$ given by \eqref{defC}.
Let $u_1$ and $u_2$ be two solutions of \eqref{para} with the same prescribed contact angle as $u_\infty$. Let $u=u_1 -u_2$. Then $u$ converges to a constant function as $t\rightarrow \infty$. In particular, if 
 $C$ is given by \eqref{defC}, then $u_1 (x,t) - u_\infty (x) -Ct$  converges uniformly to a constant as $t\to\infty$.
\end{thm}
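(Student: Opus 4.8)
The plan is to exploit the comparison structure of the equation \eqref{para}: if $u_1$ and $u_2$ both solve \eqref{para} with the same prescribed contact angle as $u_\infty$, their difference $u = u_1 - u_2$ satisfies a homogeneous linear uniformly parabolic equation $\partial_t u = \tilde a^{ij} u_{i;j} + b^i u_i$ in $\Omega \times (0,\infty)$ with the oblique boundary condition $\tilde c^{ij} u_i \nu_j = 0$ on $\partial\Omega$, exactly as in the proof of Lemma~\ref{lem2.7} (subtract the two equations, write the divergence terms in nondivergence form, and use the mean value theorem on the coefficients). The key point is that, because of the uniform gradient bound $W \le C_1$ from Lemma~\ref{unigradest}, the matrices $\tilde a^{ij}$ are uniformly elliptic with ellipticity constants \emph{independent of $t$}, and the coefficients $b^i$ are uniformly bounded; this is what upgrades the maximum principle from a fixed finite time interval to the infinite time horizon.

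First I would record that $\osc_{\bar\Omega} u(\cdot,t) := \max_{\bar\Omega} u(\cdot,t) - \min_{\bar\Omega} u(\cdot,t)$ is nonincreasing in $t$: this is the weak maximum principle (Hopf) for the oblique derivative problem, valid on each interval $[t_0, t_1]$ by \cite{lieber}, applied to $u - \max_{\bar\Omega} u(\cdot,t_0)$ and to $\min_{\bar\Omega} u(\cdot,t_0) - u$. Hence $\osc_{\bar\Omega} u(\cdot,t) \to \delta \ge 0$ as $t\to\infty$. The task is then to show $\delta = 0$, i.e.\ that $u$ actually converges to a single constant. For this I would use a compactness-plus-strong-maximum-principle argument: by the uniform gradient bound together with interior and boundary parabolic Schauder (or De Giorgi–Nash–Moser) estimates for the linear equation, the family $\{u(\cdot, t + s) : s \ge 0\}$ is, for $t$ large, uniformly bounded in $C^{2,\alpha}_{\loc}$ on parabolic cylinders, so along any sequence $t_j \to \infty$ a subsequence of $u(\cdot, t_j + s)$ converges in $C^2_{\loc}(\bar\Omega \times \R)$ to an \emph{ancient} solution $u_\infty^\ast$ of a limiting linear uniformly parabolic oblique problem, with $\osc_{\bar\Omega} u_\infty^\ast(\cdot,s) \equiv \delta$ for all $s$. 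If $\delta > 0$, the spatial maximum of $u_\infty^\ast(\cdot,s)$ is attained and constant in $s$; the strong maximum principle in the interior, together with the Hopf boundary point lemma for the oblique condition (the boundary value cannot be an interior-type extremum unless $u_\infty^\ast$ is constant), forces $u_\infty^\ast \equiv$ const, contradicting $\delta > 0$. Therefore $\delta = 0$, and since $\osc \to 0$ while $\min_{\bar\Omega} u(\cdot,t)$ is monotone and bounded (by Lemma~\ref{lem2.7} applied to $u_1$ with $u_\infty$, noting $u_2$ ranges over solutions with the same data up to the additive constant structure, so $u$ stays bounded), $\min_{\bar\Omega} u(\cdot,t)$ converges to some constant $a$ and hence $u(\cdot,t) \to a$ uniformly on $\bar\Omega$.

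For the final assertion, I would take $u_2(x,t) = u_\infty(x) + Ct$, which by the observation preceding Lemma~\ref{lem2.7} is a solution of \eqref{para} (with initial data $u_\infty$) having exactly the prescribed contact angle $\phi$ of $u_\infty$; applying the first part with $u_1$ an arbitrary solution of \eqref{para} with the same contact angle gives that $u_1(x,t) - u_\infty(x) - Ct$ converges uniformly on $\bar\Omega$ to a constant as $t\to\infty$.

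The main obstacle is the step ruling out $\delta > 0$: one must handle the oblique boundary condition carefully, since the strong maximum principle alone does not prevent the extremum from sitting on $\partial\Omega$ — this is precisely where the Hopf-type boundary point lemma for uniformly oblique derivative problems is needed, and where the uniform ellipticity coming from Lemma~\ref{unigradest} is indispensable (it is what makes the limiting ancient problem genuinely parabolic rather than degenerate). The compactness argument also requires uniform-in-$t$ interior and boundary regularity estimates, which again rely on the time-independent gradient bound.
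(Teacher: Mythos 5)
Your proof is correct and follows essentially the same route as the paper's (which reproduces the argument of Altschuler--Wu): the difference $u=u_1-u_2$ solves a uniformly parabolic linear oblique derivative problem, the oscillation $F_u(t)$ is nonincreasing, and a compactness argument along $t_j\to\infty$ produces an ancient limiting solution whose oscillation is constant, which the strong maximum principle together with the Hopf boundary point lemma then forces to be constant, giving the contradiction. You spell out several points the paper leaves implicit --- in particular that the time-independent gradient bound from Lemma~\ref{unigradest} is exactly what makes the linearized operator uniformly elliptic and hence makes both the compactness step (Schauder/De Giorgi--Nash--Moser) and the Hopf lemma applicable uniformly in $t$ --- so your version is a fuller account of the same argument.
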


\begin{proof}
The proof is given in \cite[p. 109]{AW}. We reproduce it for the reader's convenience. 
One can check that $u$ satisfies
$$\begin{cases} \dfrac{\partial u}{\partial t} =\tilde{a}^{ij} u_{i;j}+b^i u_i &\text{ in } \Omega \times [0,\infty)\\ \tilde{c}^{ij}u_i \nu_j =0 &\text{ on }  \partial \Omega \times [0,\infty),\end{cases}$$
where $\tilde{a}^{ij}$, $\tilde{c}^{ij}$ are positive definite matrices and $b^i \in \R$. By the strong maximum principle, we get that the function $F_u(t)=\max u(\cdot,t) - \min u(\cdot,t) \geq 0$ is either strictly decreasing or $u$ is constant. Assuming on the contrary that $\lim_{t\to\infty}u$ is not a constant function, setting $u_n (\cdot,t)=u(\cdot,t-t_n)$ for some sequence $t_n \rightarrow \infty$, we would get a non-constant solution, say $v$, defined on $\Omega \times (-\infty ,+\infty)$ for which $F_v$ would be constant. We get a contradiction with the maximum principle.
\end{proof}

Theorem~\ref{thm-main} now follows from Lemma~\ref{unigradest} and Theorem~\ref{thm-conv}.

\section{Proof of Theorem~\ref{thm-cor}}\label{sketch}
Theorem~\ref{thm-cor} is essentially proven in \cite[Theorem 2.1, 3.1]{gmww}. The only extra ingredient we must take into account in our non-flat case is the following Ricci identity for the Hessian $\varphi_{i;j}$ of a smooth function $\varphi$
\begin{equation}\label{ricid}
\varphi_{k;ij}=\varphi_{i;kj}=\varphi_{i;jk}+R^\ell_{kji}\varphi_\ell.
\end{equation} 
For the convenience of the reader, we mostly use the same notations as in \cite{gmww}. Thus let $h$ be a smooth defining function of $\Omega$ such that $h<0$ in $\Omega$, $h=0$ on 
$\partial \Omega$, $(h_{i;j}) \geq k_1 (\delta_{ij})$ for some constant $k_1>0$ and $\sup_{\Omega} |\nabla h|\leq 1$, $h_\gamma =-1$ and $|\nabla h|=1$ on $\partial \Omega$.
Furthermore, by strict convexity of $\Omega$, the second fundamental form of $\partial\Omega$ satisfies
\[
(\kappa_{ij})_{1\le i,j\le n-1}
\ge\kappa_0 (\delta_{ij})_{1\le i,j\le n-1},
\]
where $\kappa_0>0$ is the minimal principal curvature of $\partial\Omega$.

We consider the equation
\begin{equation}\label{capil-eq}
	\begin{cases}
	a^{ij}u_{i;j}:=\left(g^{ij}-\frac{u^i u^j}{1+|\nabla u|^2}\right) u_{i;j}
 = \ve u &\text{ in } \Omega \\
	\p_\gamma u = \phi \sqrt{1+|\nabla u|^2}&\text{ on } \p\Omega
	\end{cases}
	\end{equation}
for small $\ve>0$. Writing $\phi=-\cos\theta$, $v=\sqrt{1+|\nabla u|^2}$ and 
\[
\Phi(x)=\log w(x) +\alpha h(x),
\]
where $w(x)=v-u^\ell h_\ell \cos \theta$ and $\alpha>0$ is a constant to be determined, we assume that the maximum of $\Phi$ is attained in a point 
$x_0\in\bar{\Omega}$. If $x_0\in\partial\Omega$, 
we can proceed as in \cite[pp. 34-36]{gmww}. 
Thus choosing
$0<\alpha<\kappa_0$ and $0<\varepsilon_0\le \varepsilon_\alpha<1$ such that
\begin{equation}\label{eps-cond}
\kappa_0-\alpha>\frac{\varepsilon_\alpha(M_1+3)}{1-\varepsilon_\alpha^2},
\end{equation}
where $M_1=\sup_{\bar{\Omega}}|\nabla^2 h|$, yields an upper bound
\[
|\nabla'u(x_0)|^2\le \dfrac{\frac{\varepsilon_0(M_1+3)}{1-\varepsilon_0^2}+\alpha}{\kappa_0-\alpha-\frac{\varepsilon_0(M_1+3)}{1-\varepsilon_0^2}}
<
\dfrac{\kappa_0}{\kappa_0-\alpha-\frac{\varepsilon_\alpha(M_1+3)}{1-\varepsilon_\alpha^2}}
\]
for the tangential component of $\nabla u$ on $\partial\Omega$. Combining this with the boundary condition
$u_\gamma=-v\cos\theta$ gives an upper bound for 
$|\nabla u(x_0)|$ and hence for $\Phi(x_0)$.

The only difference to the Euclidean case occurs when $x_0\in\Omega$, i.e. is an interior point of $\Omega$.
At this point we have, using the same notations as in \cite[p. 42]{gmww}, 
\[
0=\Phi_i(x_0)=\frac{w_i}{w}+\alpha h_i
\]
and
\[
0\ge a^{ij}\Phi_{i;j}(x_0)=\frac{a^{ij}w_{i;j}}{w}-\alpha^2 a^{ij}h_i h_j +\alpha a^{ij}h_{i;j}=:I+II+III.
\]
We choose normal coordinates at $x_0$ such that 
$u_1(x_0)=|\nabla u(x_0)|$ and\\ $(u_{i;j}(x_0))_{2\le i,j\le n}$ is diagonal. Then at $x_0$, we have
\[
II+III \geq -\alpha^2 (1+1/v^2)+\alpha k_1 (n-1+1/v^2).
\]
We denote $J=a^{ij}w_{i;j}=J_1 +\tilde{J}_2+J_3+J_4$, where $J_1,J_3$ and $J_4$ are as in \cite[(2.19)]{gmww}. We have, by \cite[(2.22)]{gmww},
\[
J_3+J_4\geq - C (|\cos \theta|+|\nabla\theta|+|\nabla^2 \theta|)u_1 - C (|\cos \theta| +|\nabla\theta|)\sum_{i=2}^n |u_{ii}|,
\]
where $C$ depends only on $n, M_1$ and $\sup_{\bar{\Omega}}|\nabla^3h|$.
Writing $S^{\ell} = \frac{u_\ell}{v}- h_\ell \cos \theta$ and using the Ricci identity 
\[
a^{ij}u_{k;ij}=a^{ij}u_{i;jk}+\Ric (\partial_k , \nabla u)
\] 
(see \cite[(2.28)]{ZhouIMRN}) and 
\eqref{capil-eq}, we get
\begin{align*}
\tilde{J}_2&= a^{ij}\left(\frac{u^k u_{k;ij}}{v}- u_{k;ij}h^k \cos \theta\right)= S^k a^{ij}u_{i;jk} +S^k \Ric (\partial_k , \nabla u)\\
&=-S^k a^{ij}_{;k}u_{i;j}+ S^k (\varepsilon u)_k+S^k \Ric (\partial_k , \nabla u)\\
& =J_2+\varepsilon u_1 S^1 +S^k \Ric (\partial_k , \partial_1)|\nabla u|,
\end{align*}
where $J_2$ is as in \cite[(2.19)]{gmww}.
Since $|S^1|\le 2$ and $|S^k|\le 1$ for $k\ge 2$, we obtain
\begin{equation}\label{j2est}
\tilde{J}_2\ge J_2 - (n+1)|\Ric_{\Omega}| |\nabla u|,
\end{equation}
where $|\Ric_{\Omega}|$ is the bound for the Ricci curvature in 
$\Omega$, i.e. $|\Ric(x)|\le |\Ric_{\Omega}|$ for all unit vectors $x\in T\Omega$. 
At this point, we can proceed as in \cite{gmww} to get that
\[
J_1+J_2 \geq \sum_{i=2}^n \frac{u_{ii}^2}{2v}.
\]
So combining the previous estimates, we find
\[
I= \frac{J}{w}\geq -C ( |\cos \theta| +|\nabla\theta|+|\nabla^2 \theta|)-(n+1)|\Ric_{\Omega}|.
\]
Hence we obtain
\begin{align*}
0 &\geq I+II+III \ge
 -C ( |\cos \theta| +|\nabla\theta|+|\nabla^2 \theta|)-(n+1)|\Ric_{\Omega}| \\
 &\qquad -\alpha^2 (1+1/v^2)+\alpha k_1 (n-1+1/v^2)\\
&=: C_1 + C_2/v^2,
\end{align*}
where
\[
C_1=-C\varepsilon_0 -(n+1)|\Ric_{\Omega}|+\alpha\big(k_1(n-1)-\alpha\big)
\]
and $C_2=\alpha(k_1-\alpha)$. If $C_1>0$ and $C_2>0$, we get a contradiction, and therefore the maximum of $\Phi$ is attained on $\partial\Omega$. 
If $C_1>0$ and $C_2<0$, then $v^2\le -C_2/C_1$ and again we have an upper bound for $\Phi(x_0)$.
To have $C_1>0$ we need 
\begin{equation}\label{ric-cond}
|\Ric_{\Omega}|< \big(\alpha (k_1 (n-1) - \alpha\big) -C \varepsilon_0)/(n+1).
\end{equation} 
Fixing $\alpha<\min\{\kappa_0,k_1(n-1)/2\}$ and assuming that 
\begin{equation}\label{ric-cond2}
|\Ric_{\Omega}| < \big(\alpha (k_1 (n-1) - \alpha\big)/(n+1)
\end{equation}
and, finally, choosing $0<\varepsilon_0\le \min\{\varepsilon_\alpha,1/4\}$ small 
enough so that \eqref{ric-cond} holds,
we end up again with a contradiction, and therefore 
the maximum of $\Phi$ is attained on $\partial\Omega$. 
All in all, we have obtained a uniform gradient bound for a solution $u$ to \eqref{capil-eq} that is independent of $\varepsilon$.
Once the uniform gradient bound is established the rest of the proof goes as in \cite{AW} (or \cite{gmww}).

In some special cases we get sharper estimates than those above.
\begin{exa}
As the first example let us consider the hyperbolic space $\HH^n$ and a geodesic ball $\Omega=B(o,R)$. Furthermore,  we choose
\[
h(x)=\frac{r(x)^2}{2R}- \frac{R}{2}
\]
as a defining function for $\Omega$. Here $r(\cdot)=d(\cdot,o)$ is the distance to the center
$o$. Then $\kappa_0=\coth R$ and we may choose $k_1=1/R$.
Since $\Ric(\partial_k,\partial_1)=-(n-1)\delta_{k1}$, \eqref{j2est} can be replaced by
\[
\tilde{J}_2\ge J_2 - 2(n-1)|\nabla u|
\]
and consequently \eqref{ric-cond2} can be replaced by
\[
2(n-1)<\alpha\big((n-1)/R-\alpha\big),
\]
where $\alpha<\min\{\coth R,\tfrac{n-1}{2R}\}$. Hence we obtain an upper bound for the radius $R$. For instance,
if $n=2$, then $\alpha<\tfrac{1}{2R}$ and we need
$R<\tfrac{1}{2\sqrt{2}}$. For all dimensions, $\alpha=1$ and $R<\tfrac{n-1}{2n-1}$ will do. 
\end{exa}
\begin{exa}
As a second example let $N$ be a Cartan-Hadamard manifold with sectional curvatures bounded from below by $-K^2$,
with $K>0$.
Again we choose $\Omega=B(o,R)$ and 
\[
h(x)=\frac{r(x)^2}{2R}-\frac{R}{2}.
\]
Now $1/R\le\kappa_0\le K\coth(KR)$ and again we may choose $k_1=1/R$.
This time $\Ric(\partial_1,\partial_1)\ge -(n-1)K^2$ and $\Ric(\partial_k,\partial_1)\ge -\tfrac{1}{2}(n-1)K^2$
for $k=2,\ldots,n$, and therefore instead of \eqref{j2est} and \eqref{ric-cond2} we have
\[
\tilde{J}_2\ge J_2 - K^2\big((n+1)^2/2-2\big)|\nabla u|
\]
and
\[
K^2\big((n+1)^2/2 -2\big)<\alpha\big((n-1)/R-\alpha\big),
\]
where $\alpha<\min\{1/R,\tfrac{n-1}{2R}\}$. Again we obtain upper bounds for the radius $R$. If $n\ge 3$ we need
\[
R<\left(\frac{n-2}{K^2\big((n+1)^2/2-2\big)}\right)^{1/2}
\]
whereas for $n=2$ the bound
\[
R<\frac{1}{2\sqrt{2}K}
\]
is enough since now $\Ric(\partial_2,\partial_1)=0$.
\end{exa}

\subsubsection*{Conflict of interest:}
  Authors state no conflict of  interest.


\begin{thebibliography}{1}

\bibitem{AW}
Steven~J. Altschuler and Lang~F. Wu.
\newblock Translating surfaces of the non-parametric mean curvature flow with
  prescribed contact angle.
\newblock {\em Calc. Var. Partial Differential Equations}, 2(1):101--111, 1994.

\bibitem{cns}
L.~Caffarelli, L.~Nirenberg, and J.~Spruck.
\newblock The {D}irichlet problem for nonlinear second-order elliptic
  equations. {III}. {F}unctions of the eigenvalues of the {H}essian.
\newblock {\em Acta Math.}, 155(3-4):261--301, 1985.

\bibitem{gmww}
Zhenghuan Gao, Xinan Ma, Peihe Wang, and Liangjun Weng.
\newblock Nonparametric mean curvature flow with nearly vertical contact angle
  condition.
\newblock {\em J. Math. Study}, 54(1):28--55, 2021.

\bibitem{guan}
Bo~Guan.
\newblock Mean curvature motion of nonparametric hypersurfaces with contact
  angle condition.
\newblock In {\em Elliptic and parabolic methods in geometry ({M}inneapolis,
  {MN}, 1994)}, pages 47--56. A K Peters, Wellesley, MA, 1996.

\bibitem{huiskenJDE}
Gerhard Huisken.
\newblock Nonparametric mean curvature evolution with boundary conditions.
\newblock {\em J. Differential Equations}, 77(2):369--378, 1989.

\bibitem{kore-capillary}
Nicholas~J. Korevaar.
\newblock Maximum principle gradient estimates for the capillary problem.
\newblock {\em Comm. Partial Differential Equations}, 13(1):1--31, 1988.

\bibitem{lieber}
Gary~M. Lieberman.
\newblock {\em Second order parabolic differential equations}.
\newblock World Scientific Publishing Co., Inc., River Edge, NJ, 1996.

\bibitem{ZhouIMRN}
Hengyu Zhou.
\newblock Nonparametric mean curvature type flows of graphs with contact angle
  conditions.
\newblock {\em Int. Math. Res. Not. IMRN}, (19):6026--6069, 2018.

\end{thebibliography}
\end{document}